\newcommand{\Z}{\mathbb{Z}}
\newcommand{\N}{\mathbb{N}}
\newcommand{\R}{\mathbb{R}}
\newcommand{\meas}{\mathcal{M}}
\newcommand{\action}{\curvearrowright}
\newcommand{\unifd}[1]{#1^{\Z^d}}
\newcommand{\unif}[1]{#1^{\Z^2}}
\newcommand{\stR}[3]{\draw (#1,#2) rectangle +(1,1) node[midway] {#3};}
\DeclareMathOperator{\chull}{CHull}
\DeclareMathOperator{\supp}{Supp}
\theoremstyle{plain}
\newtheorem{theorem}{Theorem}[section]
\newtheorem{lemma}[theorem]{Lemma}
\theoremstyle{definition}
\newtheorem{definition}[theorem]{Definition}
\newtheorem{example}[theorem]{Example}
\newtheorem{question}[theorem]{Question}
\theoremstyle{remark}
\newtheorem{claim}[theorem]{Claim}
\title{Cellular Automata and Bootstrap Percolation}
\author{Ville Salo$^1$ \and Guillaume Theyssier$^2$ \and Ilkka Törmä$^1$}
\date{%
  $^1$ Department of Mathematics and Statistics \\
  University of Turku \\
  Turku, Finland \\
  \texttt{vosalo@utu.fi}, \texttt{iatorm@utu.fi} \\
  \phantom{x} \\
  $^2$ Aix Marseille University, CNRS, I2M\\
  Marseille, France \\
  \texttt{guillaume.theyssier@cnrs.fr} \\
  \phantom{x} \\
  \today}
\begin{document}
\maketitle

\begin{abstract}
  We study qualitative properties of two-dimensional freezing cellular automata with a binary state set initialized on a random configuration.
  If the automaton is also monotone, the setting is equivalent to bootstrap percolation.
  We explore the extent to which monotonicity constrains the possible asymptotic dynamics by proving two results that do not hold in the subclass of monotone automata.
  First, it is undecidable whether the automaton almost surely fills the space when initialized on a Bernoulli random configuration with density $p$, for some/all $0 < p < 1$.
  Second, there exists an automaton whose space-filling property depends on $p$ in a non-monotone way.
\end{abstract}

\section{Introduction}

This paper is motivated by two a priori distinct research domains: bootstrap percolation on one hand, where a complete and explicit classification of possible behaviors is sought \cite{BaBoPrSm16,DeLaDa16,BoSmUz15}, and theory of general cellular automata initialized on random configurations on the other hand, where undecidability is the norm and possible behaviors are sometimes only constrained by computability considerations \cite{HELLOUIN_DE_MENIBUS_2016,Boyer_2015}. 
We address two problems at the interface of these two domains:
\begin{itemize}
\item what are the key ingredients that make it possible to decide the
  global qualitative behavior of a model from its local definition in
  the bootstrap percolation settings, while this is highly unsolvable
  in general for cellular automata?
\item what kind of new behaviors can appear when relaxing the standard settings of bootstrap percolation?
\end{itemize}

Bootstrap percolation is a class of deterministic growth models in random environments.
The basic premise is that we have a discrete universe of \emph{sites}, typically arranged on a regular lattice such as $\Z^d$, a random subset of which are initially \emph{infected}, typically drawn from a Bernoulli distribution with some density $0 < p < 1$.
A deterministic rule, typically uniform in space and time, allows the infection to spread into healthy sites that have enough infected neighbors.
The main quantities of interest are then the probability of every site being eventually infected (called \emph{percolation}), and the distribution of the time of infection of a given site, as a function of the density $p$.
Bootstrap percolation was introduced by Chalupa, Leath and Reich in~\cite{ChLeRe79} as a model of impurities in magnetic materials.
See~\cite{DeLaDa16,morris_2017,Morris2017} for an overview of subsequent literature.

Bootstrap percolation processes can be formalized as cellular automata (CA for short) on the binary state set $\{0,1\}$ that are \emph{monotone} ($x \leq y$ implies $f(x) \leq f(y)$) and \emph{freezing} ($x \leq f(x)$ always holds) with respect to the cellwise partial order.
The automaton is initialized on a random configuration $x \in \{0,1\}^{\Z^d}$, and the freezing property guarantees that the iterates $f^n(x)$ converge to a limit configuration.
Percolation corresponds to this limit being the all-1 configuration.
We say that $f$ trivializes the initial probability measure, if percolation happens almost surely.
Percolation properties of the entire class of binary monotone freezing cellular automata were explicitly studied in~\cite{BoSmUz15,BaBoPrSm16}.
In particular, the articles contain a characterization of the classes of rules that percolate almost surely for all $p$ or any $p$, and show that they are algorithmically decidable.

Interestingly, if we study general cellular automata initialized on random configurations, a lot of algorithmic undecidability arises. For instance, $\mu$-nilpotency, which can be seen as an equivalent of trivialization of the initial probability measure (see Lemma~\ref{lem:percolationasmunilpotency}), becomes a $\Pi_3^0$-complete property \cite[Theorem 5.7]{Boyer_2015}. Also examples can be constructed, such that the limit behavior depends on the density of some state in the initial probability measure in a complex way \cite[Theorem 3]{HELLOUIN_DE_MENIBUS_2016}.
The constructions behind these results are not monotone and use non-convergent orbits, so they clearly break both the monotone and freezing hypothesis of bootstrap percolation models.

In this article we study the variety of asymptotic behaviors exhibited by freezing CA when initialized on Bernoulli random configurations, contrasted to bootstrap percolation models.
In other words, we investigate the role of monotonicity in the qualitative theory of bootstrap percolation.
We show that dropping the monotonicity requirement results in a richer set of possible asymptotic behaviors.
Such automata may still be understood as models of physical or sociological phenomena.
For example, if the cells of a graph represent agents with political leanings, then non-monotone rules can model individuals becoming suspicious of a sudden influx of opposing views among their peers.
Examples of freezing non-monotone CA have been considered in the literature, like the ``rule one'' of S. Ulam~\cite{ulam} as an attempt to study models of crystal growth, or ``life without death''~\cite{GriMoo96} which is a freezing version of Conway's Life.
The dynamics of freezing cellular automata have been studied explicitly in e.g.~\cite{GoOlTh15,GoMaMoOl17,BeMaOlTh18}.
We note that in the literature it is common to require freezing CA to be decreasing rather than increasing, but here we choose to follow the opposite convention of percolation theory.

Some of the results on monotone CA extend to the context of freezing CA, such as the well known fact that trivialization of at least one nontrivial Bernoulli measure is equivalent to the existence of a fixed point with a nonzero but finite number of 0-cells (see Lemma~\ref{lem:CofiniteIce}).
As our first main result, we show that the property is not decidable in the class of binary feezing CA, and neither is the dual property of trivialization of all nontrivial Bernoulli measures.
The latter property is more interesting, since it is not obviously equivalent to any simple combinatorial condition.
It was shown in~\cite{BaBoPrSm16} that not all freezing monotone CA fall into one of these classes, as some exhibit a nontrivial phase transition at some critical probability $0 < p_c < 1$, percolating almost never for initial density $p < p_c$ and almost surely for $p > p_c$.
Our second main result is the construction of a freezing CA that has several such phase transitions: it trivializes the Bernoulli measure of density $p$ but not the one of density $q$, for some $0 < p < q < 1$.
In particular, the trivialization property is not monotone with respect to the initial density.

Several open problems arise naturally from our investigation.
First, in the context of cellular automata it is natural to ask whether the results extend to arbitrary finite state sets.
We prove some of our auxiliary results in this context, but our main results concern the binary case.
Do monotone freezing CA with three or more states have significantly more complex dynamics than binary CA?
In particular, are the analogous trivialization properties decidable?
Second, our example of a freezing CA with two phase transitions can likely be generalized to realize a wide range of exotic trivialization phenomena.

\section{Definitions}

For a finite alphabet $A$ and $d \geq 1$ (we will mostly be dealing with the case $d = 2$), the $d$-dimensional \emph{full shift} is the set $A^{\Z^d}$ equipped with the prodiscrete topology.
Elements of $A^{\Z^d}$ are called \emph{configurations}.
For $a \in A$, the \emph{$a$-uniform configuration} $x = \unifd{a} \in A^{\Z^d}$ is defined by $x_{\vec v} = a$ for all $\vec v \in \Z^d$. We say a configuration $x \in A^{\Z^2}$ is \emph{$a$-finite} if $|\{\vec v \in \Z^2 \;|\; x_{\vec v} \neq a\}| < \infty$.
We have an action $\sigma : \Z^d \action A^{\Z^d}$ of the additive group $\Z^d$ by homeomorphisms, called the \emph{shift action}, given by $\sigma_{\vec v}(x)_{\vec n} = x_{\vec n + \vec v}$.
If $A$ is a poset (partially ordered set), then we see $A^{\Z^d}$ as a poset with the cellwise order: $x \leq y$ means $x_{\vec v} \leq y_{\vec v}$ for all $\vec v \in \Z^d$.

A $d$-dimensional \emph{pattern} is a function $w \in A^D$ with $D \subset \Z^d$ finite.
The topology of $A^{\Z^d}$ is generated by \emph{cylinder sets} of the form $[w]_{\vec v} = \{ x \in A^{\Z^d} \;|\; \sigma_{\vec v}(x)|_D = w \}$ for a pattern $w \in A^D$.
If $\vec v$ is omitted, it is assumed to be $\vec 0$.
In a slight abuse of notation, each symbol $a \in A$ stands for the pattern $\vec 0 \mapsto a$ with domain $D = \{\vec 0\}$, so that $[a]_{\vec v} = \{ x \in A^{\Z^d} \;|\; x_{\vec v} = a \}$.
For a domain $D \subset \Z^d$, $x \in A^{\Z^d}$ and $a \in A$, we write $x|_D \equiv a$ for $x|_D = a^D$.

A \emph{cellular automaton} (CA for short) is a function $f : A^{\Z^d} \to A^{Z^d}$ defined by a finite \emph{neighborhood} $N \subset \Z^d$ and a \emph{local rule} $F : A^N \to A$ with $f(x)_{\vec v} = F(\sigma_{-\vec v}(x)|_N)$ for all ${\vec v\in\Z^d}$.
If $\|\vec v\|_\infty \leq r$ holds for all $\vec v \in N$, we say $r$ is a \emph{radius} for $f$.
By the Curtis-Hedlund-Lyndon theorem \cite{hedlund}, CA are exactly the continuous functions from $A^{\Z^d}$ to itself that commute with the shift action.

Denote by $\meas(A^{\Z^d})$ the set of Borel probability measures on $A^{\Z^d}$, and by $\meas_\sigma(A^{\Z^d})$ the $\sigma$-invariant ones (which satisfy $\mu(\sigma_{\vec v}(X)) = \mu(X)$ for all Borel sets $X$ and all ${\vec v\in\Z^d}$).
We equip $\meas(A^{\Z^d})$ with the weak-${*}$ topology, or convergence on cylinder sets.
The \emph{support} $\supp(\mu)$ of $\mu \in \meas(A^{\Z^d})$ is the unique smallest closed set $K \subset A^{\Z^d}$ with $\mu(K) = 1$.
We can apply a CA $f$ to a measure $\mu \in \meas(A^{\Z^d})$ by $f(\mu)(X) = \mu(f^{-1} X)$.

For a probability vector $\pi : A \to [0,1]$ (that satisfies $\sum_{a \in A} \pi(a) = 1$), the \emph{product measure} $\mu_\pi \in \meas_\sigma(A^{\Z^d})$ is the unique Borel measure with $\mu([P]) = \prod_{\vec v \in D} \pi(P_{\vec v})$ for all patterns $P \in A^D$.
If $A = \{0,1\}$ and $0 \leq p \leq 1$, then $\mu_p = \mu_\pi$ is the Bernoulli measure with $\pi(1) = p$.
For $x \in A^{\Z^d}$, denote by $\delta_x$ the unique measure with $\delta_x(\{x\}) = 1$.

The (closed) convex hull of a set $K \subset \R^d$ is denoted $\chull(K)$.
The notation $\forall^\infty x \in X$ means ``for all but finitely many $x \in X$'', and $\exists^\infty x \in X$ means ``there exist infinitely many $x \in X$''.

\section{Freezing, monotonicity and measures}


\begin{definition}
Let $P$ be a finite poset. A cellular automaton $f$ on $P^{\Z^d}$ is \emph{freezing}, if $x \leq f(x)$ for all $x \in P^{\Z^d}$. It is \emph{monotone}, if $f(x) \leq f(y)$ for all $x \leq y \in P^{\Z^d}$.
\end{definition}

In this paper, when considering freezing cellular automata on ${\{0,1\}^{\Z^d}}$, we always implicitly refer to the poset with elements $0$ and $1$ such that ${0 < 1}$.

\begin{lemma}
\label{lem:freezingsets}
A cellular automaton $f$ on $\{0,1\}^{\Z^d}$ is freezing and monotone if and only if there exists a finite family $E$ of finite subsets of $\Z^d \setminus \{\vec 0\}$ with the following property.
For all $x \in \{0,1\}^{\Z^d}$, we have $f(x)_{\vec 0} = 1$ if and only if $x_{\vec 0} = 1$ or there exists $N \in E$ with $x|_N \equiv 1$.
\end{lemma}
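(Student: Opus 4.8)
The plan is to prove the two implications separately, with the forward (``only if'') direction carrying essentially all the content.

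For the ``if'' direction, I would start from a family $E$ as described and verify the three requirements directly. The rule clearly defines a CA with finite neighborhood $\{\vec 0\} \cup \bigcup_{N \in E} N$. Freezing is immediate: if $x_{\vec 0} = 1$ then the rule outputs $1$, so $x_{\vec 0} \leq f(x)_{\vec 0}$, and by shift-invariance $x \leq f(x)$. Monotonicity follows because both disjuncts in the rule (``$x_{\vec 0} = 1$'' and ``$x|_N = 1^N$ for some $N \in E$'') are preserved under increasing $x$: if $x \leq y$ and one of them holds for $x$, the same one holds for $y$, hence $f(x)_{\vec 0} \leq f(y)_{\vec 0}$. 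It suffices to check this at the origin since $f$ commutes with the shift.

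For the ``only if'' direction, I would invoke the Curtis--Hedlund--Lyndon theorem to fix a finite neighborhood $N_0$ and local rule $F : \{0,1\}^{N_0} \to \{0,1\}$ for $f$, and without loss of generality enlarge $N_0$ so that $\vec 0 \in N_0$ (adding the origin does not change $f$). Monotonicity of $f$ is equivalent to $F$ being a monotone Boolean function, and freezing says precisely that $F(u) = 1$ whenever $u_{\vec 0} = 1$. The remaining information in $F$ is its behaviour on patterns $u$ with $u_{\vec 0} = 0$. I would consider the set $U = \{ u|_{N_0 \setminus \{\vec 0\}} \;|\; u_{\vec 0} = 0, \; F(u) = 1 \}$; monotonicity of $F$ makes $U$ an up-set in $\{0,1\}^{N_0 \setminus \{\vec 0\}}$, so it is the union of the principal up-sets of its finitely many minimal elements. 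Identifying each minimal element with the subset $N \subseteq N_0 \setminus \{\vec 0\}$ of coordinates on which it equals $1$, I would let $E$ be the resulting finite family of finite subsets of $\Z^2 \setminus \{\vec 0\}$.

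It then remains to check that this $E$ gives the claimed characterization. For $x$ with $x_{\vec 0} = 1$ both sides equal $1$ (the left by freezing, the right by the first disjunct). For $x$ with $x_{\vec 0} = 0$ we have $f(x)_{\vec 0} = F(x|_{N_0})$, and by the definition of $U$ and $E$ this equals $1$ exactly when $x|_{N_0 \setminus \{\vec 0\}}$ dominates some minimal element, i.e.\ when $x|_N = 1^N$ for some $N \in E$. I do not expect a genuine obstacle here; the only point requiring care is the bookkeeping that isolates the origin, ensuring that the freezing behaviour at $\vec 0$ is absorbed into the disjunct ``$x_{\vec 0} = 1$'' so that every member of $E$ can be taken to avoid the origin. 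The translation between monotone Boolean functions and families of minimal ``on-sets'' is the standard correspondence between up-sets and their antichains of minimal elements, and is exactly where the monotonicity hypothesis is used.
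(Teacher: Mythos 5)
Your proposal is correct and follows essentially the same route as the paper: the paper's (very terse) proof also takes $E$ to be the minimal subsets of $\Z^2 \setminus \{\vec 0\}$ on which an all-$1$ pattern forces $f(x)_{\vec 0} = 1$, which is exactly your antichain of minimal elements of the up-set $U$. Your version just spells out the standard correspondence between monotone local rules and their minimal on-sets, plus the easy converse, in more detail.
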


\begin{proof}
Given a freezing and monotone $f$, choose $E$ as the family of minimal subsets of $\Z^d \setminus \{\vec 0\}$ such that $x|_N \equiv 1$ implies $f(x)_{\vec 0} = 1$ (all of these sets are subsets of the neighborhood of $f$, so $E$ is finite). The other direction is clear.
\end{proof}

\begin{definition}
For a freezing monotone cellular automaton $f$ on $\{0,1\}^{\Z^d}$, we write $E(f)$ for the set $E$ chosen in the proof of Lemma~\ref{lem:freezingsets}. We also denote
\[ F(f) = \{ N \in E(f) \;|\; \vec 0 \notin \chull(N) \}, \]
and $G(f) = E(f) \setminus F(f)$. For a finite family $E$ of incomparable subsets of $\Z^d \setminus \{\vec 0\}$, we denote by $f_E$ the cellular automaton defined by $E(f_E) = E$. If $E = \{N\}$ is a singleton, we may also abuse notation and write $f_N$ for $f_E$.
\end{definition}

\begin{definition}
Let $\mu \in \meas(A^{\Z^d})$ be a measure. The \emph{$\mu$-limit set} of a cellular automaton $f$ on $A^{\Z^d}$ is
\[ \Omega_f^\mu = \overline{\bigcup_{\nu \in \mathcal{F}} \supp(\nu)}, \]
where $\mathcal{F}$ is the set of limit points of the sequence $(f^n(\mu))_{n \in \N}$.
\end{definition}

If $\mu \in \meas_\sigma(A^{\Z^d})$ is shift-invariant, then $\Omega_f^\mu$ is the set of configurations $x$ such that no pattern $w$ occurring in $x$ satisfies $\lim_n \mu(f^{-n}([w]_{\vec 0})) = 0$.
$\mu$-limit sets were first defined in~\cite{KuMa00} in the shift-invariant case using this characterization.

\begin{definition}
Let $\mu \in \meas(A^{\Z^d})$ and let $f$ be a cellular automaton on $A^{\Z^d}$. We say $f$ \emph{trivializes} $\mu$, if $|\Omega_f^\mu| = 1$.
\end{definition}

In cellular automata literature, a CA $f$ is called \emph{$\mu$-nilpotent} if $\Omega_f^\mu = \{x\}$ for some unary configuration $x \in A^{\Z^d}$.

If $P$ is a poset with a maximal element $m$, and $f$ is a freezing CA on $P^{\Z^d}$ that trivializes a full-support product measure, then the limit measure must of course be concentrated on the $m$-uniform point.



\begin{lemma}
  \label{lem:percolationasmunilpotency}
  Let $P$ be a finite poset, $\mu \in \meas_\sigma(P^{\Z^d})$ of full support and $f$ a freezing cellular automaton on $P^{\Z^d}$. The following conditions are equivalent:
\begin{itemize}
\item $f$ trivializes $\mu$
\item $\lim_{n} f^n(\mu) = \delta_x$ for some (unary) $x \in P^{\Z^d}$
\item for some $p \in P$ and $\mu$-almost every $x$, we have $f^n(x)_{\vec z} = p$ for all $\vec z \in \Z^d$ and all large enough $n \in \N$ (depending on $\vec z$).
\end{itemize}
\end{lemma}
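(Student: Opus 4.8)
The plan is to route everything through the pointwise limit provided by freezing. Since $P$ is a finite poset, any nondecreasing sequence in $P$ has bounded length and hence stabilizes; applying this coordinatewise to the sequence $n \mapsto f^n(y)$, which is nondecreasing by freezing, shows that for \emph{every} $y \in P^{\Z^2}$ the limit $f^\infty(y) := \lim_n f^n(y)$ exists in the product topology, with $f^\infty(y)_{\vec z}$ the eventual value of $f^n(y)_{\vec z}$. This configuration-valued map is the bridge between the measure-theoretic statements (1)--(2) and the almost-sure statement (3). I would also use two soft facts: $\meas(P^{\Z^2})$ is compact metrizable in the weak-${*}$ topology, and each $f^n(\mu)$ is shift-invariant (as $f$ commutes with $\sigma$ and $\mu \in \meas_\sigma$), so every $\nu \in \F$ is shift-invariant and every $\supp(\nu)$ is a nonempty closed shift-invariant set.

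Next I would establish $(1) \Leftrightarrow (2)$ by pure topology. The implication $(2) \Rightarrow (1)$ is immediate: if $f^n(\mu) \to \delta_x$ then $\F = \{\delta_x\}$ and $\Omega_f^\mu = \overline{\{x\}} = \{x\}$. For $(1) \Rightarrow (2)$, suppose $\Omega_f^\mu = \{x\}$. Each $\nu \in \F$ satisfies $\supp(\nu) \subseteq \{x\}$, so $\supp(\nu) = \{x\}$ and $\nu = \delta_x$; since $\supp(\nu)$ is shift-invariant, $\{x\}$ is shift-invariant, forcing $x$ to be unary, say $x = p^{\Z^2}$. Thus every convergent subsequence of $(f^n(\mu))$ has limit $\delta_{p^{\Z^2}}$, and compactness of $\meas(P^{\Z^2})$ upgrades this to convergence of the whole sequence, giving (2).

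Then I would prove $(2) \Leftrightarrow (3)$ using $f^\infty$ and dominated convergence. For any cylinder $[w]_{\vec v}$ and every $y$, coordinatewise stabilization gives $\mathbf{1}[f^n(y) \in [w]_{\vec v}] \to \mathbf{1}[f^\infty(y) \in [w]_{\vec v}]$, so dominated convergence yields $f^n(\mu)([w]_{\vec v}) = \int \mathbf{1}[f^n(y) \in [w]_{\vec v}]\, d\mu \to \int \mathbf{1}[f^\infty(y) \in [w]_{\vec v}]\, d\mu$. For $(3) \Rightarrow (2)$, if $f^\infty(y) = p^{\Z^2}$ for $\mu$-a.e.\ $y$, the right-hand side equals $\delta_{p^{\Z^2}}([w]_{\vec v})$ for every cylinder, i.e.\ $f^n(\mu) \to \delta_{p^{\Z^2}}$. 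For $(2) \Rightarrow (3)$, apply the computation to the singleton cylinders $[q]_{\vec 0}$, $q \in P$: the left-hand side tends to $\delta_{p^{\Z^2}}([q]_{\vec 0}) = \mathbf{1}[q = p]$, while the right-hand side is $\mu(\{y : f^\infty(y)_{\vec 0} = q\})$, whence $f^\infty(y)_{\vec 0} = p$ for $\mu$-a.e.\ $y$. Using $f^\infty(\sigma_{\vec z} y)_{\vec 0} = f^\infty(y)_{\vec z}$ and shift-invariance of $\mu$, the same holds at every coordinate $\vec z$, and a countable intersection over $\vec z \in \Z^2$ gives $f^\infty(y) = p^{\Z^2}$ for $\mu$-a.e.\ $y$, which is exactly (3).

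The genuinely load-bearing step is the interchange of limit and integral via $f^\infty$: once coordinatewise stabilization is in place, dominated convergence identifies the weak-${*}$ limit of $f^n(\mu)$ on cylinders with integrals of the honest pointwise limit, and this is what converts convergence of measures into an almost-sure statement and back. The remaining points are routine: the compactness argument for $(1) \Rightarrow (2)$, and the shift-invariance and countable-intersection bookkeeping needed to pass from coordinate $\vec 0$ to the full lattice in $(2) \Rightarrow (3)$. I expect no real obstacle beyond checking measurability, which is automatic since each $\{y : f^n(y)_{\vec z} = q\}$ is clopen (preimage of a cylinder under the continuous map $f^n$), so $\{y : f^\infty(y)_{\vec z} = q\}$ is a countable Boolean combination of cylinders, hence Borel.
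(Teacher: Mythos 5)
Your proof is correct. For the equivalence of the first two items you follow essentially the same route as the paper: supports of weak-${*}$ limit points, shift-invariance of those limit points forcing the limit configuration to be unary, and compactness of $\meas(P^{\Z^2})$ to upgrade subsequential convergence to convergence of the whole sequence. Where you genuinely diverge is in handling the third item. You introduce the pointwise limit map $f^\infty(y) = \lim_n f^n(y)$, which exists coordinatewise because a nondecreasing sequence in a finite poset stabilizes, observe via dominated convergence that $f^n(\mu)$ converges on every cylinder to the pushforward of $\mu$ under $f^\infty$, and then read the equivalence of the second and third items directly off this identity (using shift-invariance of $\mu$ and a countable intersection to propagate the almost-sure statement from the origin to all of $\Z^2$). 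The paper instead links the first and third items directly, via the decreasing events $E_{\vec z,n} = f^{-n}(P^{\Z^2}\setminus[m]_{\vec z})$ and continuity of $\mu$ from above, after using the full-support hypothesis to argue that the state $p$ in the third item must be a maximal element $m$ of $P$. The two arguments rest on the same underlying measure-theoretic fact --- the measure of a monotone sequence of events converges to the measure of its limit --- but your packaging is arguably cleaner: it yields the slightly stronger structural statement that for a freezing CA the sequence $(f^n(\mu))_n$ \emph{always} converges (to the pushforward of $\mu$ under $f^\infty$), and it does not invoke the full-support hypothesis at all, which in the paper is used only to identify which state $p$ can occur, not for the equivalence itself.
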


\begin{proof}
  Suppose that $f$ trivializes $\mu$, so that $\Omega_f^\mu = \{x\}$ for some $x \in P^{\Z^d}$.
  Since $\mu$ is shift-invariant, for each $\vec v \in \Z^d$ we have
  \[
    \{\sigma_{\vec v}(x)\} = \Omega_f^{\sigma_{\vec v} \mu} = \Omega_f^\mu = \{x\}
  \]
  and hence there exists $p \in P$ such that $x = \unif{p}$ is unary.
  Thus, for each finite pattern $w$ containing an occurrence of some $q \in P \setminus \{p\}$ we have $\lim_n f^n \mu([w]) = 0$, and for each all-$p$ pattern $w'$ we have $\lim_n f^n \mu([w']) = 1$.
  This implies $\lim_n f^n \mu = \delta_x$, the second item.
  The converse is clear, so the first two items are equivalent.
  
  Denote by $m$ a maximal element of $P$.
  Since $f$ is freezing,
  \begin{equation}
    \label{eq:freeze}
    [m]_{\vec z} \subseteq f^{-n}([m]_{\vec z})
  \end{equation}
  for all $n \in \N$ and $\vec z \in \Z^d$.
  From~\eqref{eq:freeze} and the full support of $\mu$ it follows that no other state than $m$ can be chosen as $p$ in the third item.
  Let
  \[
    B = \{ x \in P^{\Z^d} \;|\; \forall \vec v \in \Z^d \  \forall^\infty n \in \N : f^n(x)_{\vec v} = m \}
  \]
  be the set of configurations that satisfy the condition of the third item.
  Consider ${E_{\vec z,n} = f^{-n}(P^{\Z^d} \setminus [m]_{\vec z})}$.
  From~\eqref{eq:freeze} we have ${E_{\vec z,n+1}\subseteq E_{\vec z,n}}$ for all $\vec z$ and $n$, and $B = P^{\Z^d} \setminus \bigcup_{\vec z \in \Z^d} \bigcap_{n \in \N} E_{\vec z,n}$.
  
  Suppose that $f$ does not trivialize $\mu$.
  Then there must be $\epsilon>0$ such that ${\mu(E_{\vec 0,n})\geq\epsilon}$ for all $n\geq 0$ (otherwise we would have ${\lim_n\mu(f^{-n}([m]))=1}$ since sets $E_{\vec 0,n}$ are decreasing, a contradiction).
  By continuity of $\mu$ from above, we deduce ${\mu(\bigcap_{n \in \N} E_{\vec 0,n})\geq\epsilon}$, so $\mu(B)\leq 1-\epsilon<1$.
  Therefore the third item does not hold.
  
  Suppose then that the third item does not hold, so that $\mu(B) < 1$.
  Since ${P^{\Z^d} \setminus B = \bigcup_{\vec z \in \Z^d} \bigcap_{n \in \N} E_{\vec z,n}}$ has positive measure, $\epsilon := \mu(\bigcap_{n \in \N} E_{\vec z,n}) > 0$ for some $\vec z$.
  For each $n$, we then have $\sum_{p \neq m} \mu(f^{-n}([p]_{\vec z})) = \mu(E_{\vec z,n}) \geq \epsilon$, and in particular $\mu(f^{-n}([p]_{\vec z})) \geq \epsilon/|P|$ for some $p \in P \setminus \{m\}$.
  For some $p$ this holds for infinitely many $n$, so some limit point $\nu$ of $(f^n \mu)_{n \in \N}$ satisfies $\nu(f^{-n}([p]_{\vec z}) \geq \epsilon/|P|$.
  Thus $f$ does not trivialize $\mu$.
  We have shown that the third item is equivalent to the first.
\end{proof}

We note that the first two items of Lemma~\ref{lem:percolationasmunilpotency} are equivalent even without the freezing hypothesis, and for the third item we only need the condition that some state $m \in P$ is \emph{persistent}, that is, $x_{\vec v} = m$ implies $f(x)_{\vec v} = m$. More dynamically (and generally) stated, it suffices that some configuration $x \in A^{\Z^d}$ is \emph{Lyapunov stable}, meaning
\[ \forall \epsilon > 0: \exists \delta > 0: \forall y \in A^{\Z^d}: \left( d(x, y) < \delta \implies \forall n \in \N: d(f^n(x), f^n(y)) < \epsilon \right), \]
where $d$ is any metric for the Cantor topology of $A^{\Z^d}$.

\begin{example}
  \label{ex:simpleca}
  Let $h = f_{\{(0,1), (1,1)\}}$, and let $x \in \{0,1\}^{\Z^2}$. Then $h^n(x)_{\vec 0} = 0$ for all $n \in \N$ if and only if there exists a path $(\vec z_i)_{i \in \N}$ in $\Z^2$ such that $\vec z_0 = \vec 0$, $\vec z_{i+1} \in \{\vec z_i + (0,1), \vec z_i + (1,1)\}$ and $x_{\vec z_i} = 0$ for all $i \geq 0$.
  Indeed, if such a path exists, then every cell in it will always have the state $0$, including the origin. On the other hand, if an infinite path does not exist, by K\H{o}nig's lemma there is a bound for the length of the paths. If the maximal length of a path starting from the origin in $x$ is $k$, then that in $h(x)$ is $k-1$. Inductively, we see that $h^k(x)_{\vec 0} = 1$.

  Take the Bernoulli measure $\mu_p \in \meas(\{0,1\}^{\Z^2})$ for ${0<p<1}$, and consider the probability $\theta(p)$ that in a $\mu_p$-random configuration $x \in \{0,1\}^{\Z^2}$ there exists an infinite path $(\vec z_i)_{i \in \N}$ of 0-states as above.
  This probability is clearly nonincreasing with respect to $p$, and the infimum of those $p$ for which it equals 0 is $1 - p_c$, where $p_c$ is the critical probability of nearest-neighbor oriented site percolation on $\N^2$; see~\cite[Section~12.8]{Gr13} or \cite{Durrett_1984} for discussion on the analogous bond percolation model.
  If $p > 1 - p_c$, then $h$ trivializes $\mu_p$, and if $p < 1 - p_c$, then it does not.
\end{example}

\begin{definition}
Let $P$ be a finite poset with maximal element $m$, and $f$ a freezing cellular automaton on $P^{\Z^d}$. An \emph{obstacle} is an $m$-finite configuration $x \in P^{\Z^d}$ such that $x \neq \unif{m}$ and $f(x) = x$.
\end{definition}

Of course, $f$ not admitting any obstacle is equivalent to the condition that for all $m$-finite $x \in P^{\Z^d}$, we have $f^n(x) = \unif{m}$ for large enough $n$.
The existence of an obstacle was called \emph{subcriticality} in~\cite{GrGr96}.
In~\cite{BoSmUz15} the term was given a different meaning.

\begin{lemma}
\label{lem:CofiniteIce}
Let $P$ be a finite poset with maximal element $m$, and $f$ a freezing cellular automaton on $P^{\Z^d}$. The following conditions are equivalent:
\begin{itemize}
\item $f$ trivializes some full support product measure.
\item $f$ trivializes some full support measure.
\item $f$ does not admit an obstacle.
\item For some $\epsilon > 0$, $f$ trivializes any product measure giving probability at least $1 - \epsilon$ to $m$.
\end{itemize}
\end{lemma}

\begin{proof}
  It is clear that the first item implies the second item and that the fourth implies the first.

  Suppose then that the third item does not hold, and $y \in P^{\Z^d}$ is an obstacle. Let $r$ be a radius for $f$ and let $p = y|_{[-k,k]^2}$, where $k = \max \{ | \vec v |_\infty \;|\; x_{\vec v} \neq m \} + r$. For any measure $\mu$ of full support, we have a positive probability that $p$ appears at the origin of a $\mu$-random configuration $x$. Due to freezing and the assumption on the radius, it follows by induction that $f^n(x)|_{[-k,k]^2} = p$ for all $n$, thus $f$ does not trivialize any full support measure. It follows that the second item implies the third.

  Suppose then that the third condition holds, and again let $r$ be the radius of $f$. It is a classical fact in percolation theory that for any connected unoriented graph $G$ with bounded degree, there exists an initial density $p > 0$ such that site percolation occurs in $G$ with probability $0$ (see e.g.\ Lemma~11 in~\cite{BoRi06}). Let $p$ be such a density for the graph with vertices $\Z^d$ and an edge between $\vec v$ and $\vec w$ whenever $|\vec v - \vec w|_\infty \leq r$. Now let $\mu$ be a product measure with any positive parameters such that the probability of $m$ is at least $1 - p$. If $x$ is $\mu$-random, then there is almost surely no infinite path from the origin to infinity. By K\H{o}nig's lemma this is equivalent existence of a finite set $D \subset \Z^d$ such that $\vec 0 \in D$ and $x_{\vec v} = m$ whenever $\vec v \in D$ and $\vec w \notin D$ for some $|\vec v - \vec w|_\infty \leq r$. Let $y \in P^{\Z^d}$ be the configuration satisfying $y|_D = x|_D$ and $y_{\vec v} = m$ for $\vec v \notin D$. Then $f^n(y) = m^{\Z^d}$ for large enough $m$, since $f$ admits no obstacles. Due to freezing and the assumption on the radius we have $f^n(x)_{\vec 0} = f^n(y)_{\vec 0} = m$.
\end{proof}

Again, this lemma is true in much higher generality. It suffices that some state is persistent, or that the CA admits at least one Lyapunov stable configuration. The lattice $\Z^d$ may also be replaced by any finitely-generated group.

\section{Trivialization for general freezing CA}
\label{sec:non-monotone}

The following definition is from~\cite{BoSmUz15}.

\begin{definition}
  Let $f$ be a freezing and monotone CA on $\{0,1\}^{\Z^2}$.
  For a unit vector $\vec v \in S^1$, write $x^{\vec v} \in \{0,1\}^{\Z^2}$ for the configuration defined by
  \[
    x^{\vec v}_{\vec z} =
    \begin{cases}
      1, & \text{if $\vec v \cdot \vec z < 0$,} \\
      0, & \text{otherwise.}
    \end{cases}
  \]
  A direction $\vec v \in S^1$ is \emph{stable} for $f$, if $f(x^{\vec v}) = x^{\vec v}$.
  The set of stable directions for $f$ is denoted $S(f)$.
\end{definition}

By~\cite[Theorem~1.10]{BoSmUz15}, $S(f)$ is a finite union of closed sub-intervals of $S^1$ with rational endpoints, and it is computable from the local rule of $f$.
The other results of \cite{BoSmUz15} and~\cite[Theorem~1]{BaBoPrSm16} together imply that the conditions of Lemma~\ref{lem:CofiniteIce} are equivalent to $S(f) \neq S^1$, and that $f$ trivializes all nontrivial Bernoulli measures if and only if there exists an open semicircle disjoint from the interior of $S(f)$.
Thus it is decidable whether a freezing monotone CA trivializes at least one nontrivial Bernoulli measure, or all of them.
We now show that when the monotonicity requirement is dropped, both problems become undecidable.


\begin{theorem}
  \label{thm:undecidable}
  The problem of whether a given binary freezing CA trivializes some nontrivial Bernoulli measure is undecidable.
  The same holds for trivializing all nontrivial Bernoulli measures.
\end{theorem}

We use some basic notions from automata theory and tiling theory in the construction.
A \emph{Turing machine} $T$ is defined by a finite set $Q$ of internal states with distinguished initial state $q_0 \in Q$ and halting state $q_h \in Q$, a finite set $\Sigma$ of tape symbols with a distinguished blank symbol $B \in \Sigma$, and a transition function $\delta : Q \times \Sigma \to Q \times \Sigma \times \{{+1}, {-1}\}$.
A value $\delta(p, s) = (q, r, d)$ indicates that when the read-write head of $T$ is in state $p$ on a tape symbol $s$, it should assume state $q$, write $r$ on the tape, and take one step in the direction $d$.
The machine is initialized in state $q_0$ on the left end of a right-infinite tape filled with $B$-symbols.
We assume that it never tries to step to the left of the leftmost tape cell, and will only enter the state $q_h$ on the leftmost tape cell.
These assumptions do not affect the undecidability of the \emph{halting problem} of whether the machine eventually enters the state $q_h$.

Let $C$ be a finite set of colors.
A \emph{Wang tile set on $C$} is a set of quadruples $S \subset C^4$.
A Wang tile $(a,b,c,d)$ should be visualized as a square whose east, north, west and south edges are labeled by the four colors in this order.
A pattern $P \in S^D$ over $S$ and of domain ${D\subseteq\Z^2}$ is \emph{valid} if the east color of $P_{(i,j)}$ equals the west color of $P_{(i+1,j)}$ whenever $(i,j), (i+1,j) \in D$, and the north color of $P_{(i,j)}$ equals the south color of $P_{(i,j+1)}$ whenever $(i,j), (i,j+1) \in D$.

Given a Turing machine $T$ with state set $Q$, tape alphabet $\Sigma$ and transition function $\delta$, one can construct a Wang tile set $S_T$ that simulates its computations~\cite[§4]{Ro71}.
The tiles are shown in Figure~\ref{fig:wang-tiles}, from left to right, for all valid combinations of the symbols and states:
\begin{enumerate}
\item
  a tape cell with symbol $s \in \Sigma$ and no head,
\item
  a tape cell with symbol $s$ and head in state $p \in Q$ with $\delta(p,s) = (q, r, {+1})$,
\item
  a tape cell with symbol $s$ and head entering from the left in state $q$,
\item
  a tape cell with symbol $s$ and head entering from the right in state $q$,
\item
  a tape cell with symbol $s$ and head in state $p$ with $\delta(p,s) = (q, r, {-1})$.
\end{enumerate}
It is clear that $T$ eventually halts if and only if there exists a valid rectangular pattern $P \in S_T^{[0, n-1] \times [0, m-1]}$ such that $P_{(0,0)}$ is a type-2 tile with $s = B$ and $p = q_0$, $P_{(0,m-1)}$ is a type-4 tile with $q = q_h$, $P_{(i,0)}$ is a type-1 tile with $s = B$ for all $1 \leq i < n$, and the west and east borders of $P$ are colored by the unmarked color.
We say that such a $P$ is a \emph{halting rectangle}.

\begin{figure}[htp]
  \centering
  \begin{tikzpicture}

    \draw (0,0) rectangle ++(1,1);
    \node at (0.5,-0.25) {$s$};
    \node at (0.5,1.25) {$s$};

    \draw (1.5,0) rectangle ++(1,1);
    \node at (2,-0.25) {$(p, s)$};
    \node at (2,1.25) {$r$};
    \node at (2.75,0.5) {$q$};
    \draw [->] (2,0.1) -- (2,0.5) -- (2.4,0.5);

    \draw (3.5,0) rectangle ++(1,1);
    \node at (4,-0.25) {$s$};
    \node at (4,1.25) {$(q,s)$};
    \node at (3.25,0.5) {$q$};
    \draw [->] (3.6,0.5) -- (4,0.5) -- (4,0.9);

    \draw (5,0) rectangle ++(1,1);
    \node at (5.5,-0.25) {$s$};
    \node at (5.5,1.25) {$(q,s)$};
    \node at (6.25,0.5) {$q$};
    \draw [->] (5.9,0.5) -- (5.5,0.5) -- (5.5,0.9);

    \draw (7,0) rectangle ++(1,1);
    \node at (7.5,-0.25) {$(p, s)$};
    \node at (7.5,1.25) {$r$};
    \node at (6.75,0.5) {$q$};
    \draw [->] (7.5,0.1) -- (7.5,0.5) -- (7.1,0.5);

  \end{tikzpicture}
  \caption{The tile set $S_T$.}
  \label{fig:wang-tiles}
\end{figure}

\begin{proof}[Proof of Theorem~\ref{thm:undecidable}]
  We prove the two claims simultaneously by a reduction from the halting problem of Turing machines: given a Turing machine $T$, we construct (via a computable method) a binary freezing CA $g_T$ such that if $T$ halts on the empty input, then $g_T$ does not trivialize any full-support Bernoulli measure, and otherwise it trivializes them all.
  We first construct an auxiliary freezing CA $f_T$ on a finite poset alphabet $P$ that depends on $T$, and then show how to modify the construction to use the binary alphabet.
  The dynamics of $f_T$ is simply to check some local conditions (in symbolic dynamical terms, whether the configuration belongs to some subshift of finite type), turn any cell into a maximal state $M \in P$ when a local error is detected, and to propagate the state $M$ to neighboring cells according to certain conditions.

  Denote $D = \{N, NE, E, SE, S, SW, W, NW\}$.
  We interpret it as a set of cardinal and diagonal directions (north, north-east, east, etc).
  The state set of $f_T$ is ${P = S_T \cup \{M\}\cup D}$, where $S_T$ is the Wang tile set simulating $T$. We call a \emph{frame} any rectangular pattern whose perimeter is made with states from $D$ in the following way: the north, east, south, west  sides are respectively in state $N$, $E$, $S$, $W$, and the north-east, south-east, south-west, north-west corners are respectively in states $NE$, $SE$, $SW$, $NW$. We say that a frame contains a valid halting computation if its interior consists of $S_T$-cells that form a halting rectangle of $T$, as defined above.
  See Figure~\ref{fig:frame}.
  Then, valid configurations are those made of frames containing a valid halting computation, and with only state $M$ outside frames. All these conditions can be defined by a set of forbidden ${2\times 2}$ patterns (intuitively, the ${2\times 2}$ patterns not appearing in Figure~\ref{fig:frame}).

\begin{figure}[htp]
  \centering
  \begin{tikzpicture}[scale=0.8]
    \stR{0}{0}{$M$}; \stR{1}{1}{$M$}; \stR{1}{9}{$M$}; \stR{0}{10}{$M$}; \stR{9}{10}{$M$}; \stR{8}{9}{$M$}; \stR{8}{1}{$M$}; \stR{9}{0}{$M$};
    \draw[dotted] (.5,1)--(.5,10); \draw[dotted] (1.5,2)--(1.5,9); \draw[dotted] (9.5,1)--(9.5,10); \draw[dotted] (8.5,2)--(8.5,9);
    \draw[dotted] (1,.5)--(9,.5); \draw[dotted] (2,1.5)--(8,1.5); \draw[dotted] (1,10.5)--(9,10.5); \draw[dotted] (2,9.5)--(8,9.5);
    \stR{2}{2}{$SW$}; \stR{3}{2}{$S$}; \stR{6}{2}{$S$}; \stR{7}{2}{$SE$}; 
    \stR{2}{3}{$W$};  \stR{7}{3}{$E$}; \stR{2}{7}{$W$};  \stR{7}{7}{$E$};  
    \stR{2}{8}{$NW$}; \stR{3}{8}{$N$}; \stR{6}{8}{$N$}; \stR{7}{8}{$NE$}; 
    \draw[dotted] (2.5,4)--(2.5,7); \draw[dotted] (7.5,4)--(7.5,7);
    \draw[dotted] (4,2.5)--(6,2.5); \draw[dotted] (4,8.5)--(6,8.5);
    \draw[very thick] (2,2)--(2,9)--(8,9)--(8,2)--cycle;
    \draw[very thick] (3,3)--(3,8)--(7,8)--(7,3)--cycle;
    \stR{3}{3}{$q_0$}; \stR{3}{7}{$q_h$};
    \draw[->,very thick] (4,3.5)--(4.5,3.5)--(4.5,4)--(5.5,4.33)--(5.5,4.66)--(3.5,5)--(3.5,5.5)-- node[midway,sloped,above] {read-write head}(6.5,6)--(6.5,6.5)--(4.5,7)--(4.5,7.5)--(4,7.5);
  \end{tikzpicture}
  \caption{A frame encoding a valid halting computation.}
  \label{fig:frame}
\end{figure}

Depending on its state, we define the \emph{active neighbors} of a cell:
\begin{itemize}
\item a cell in a state from $P \setminus D$ has four active neighbors, one in each cardinal direction;
\item a cell in a state $d \in D$ has active neighbors in the cardinal directions which do not appear in $d$: \emph{e.g.} a cell in state $NE$ has active neighbours to the south and west.
\end{itemize}
The dynamics of $f_T$ is precisely the following:
\begin{enumerate}
\item if a cell belongs to some forbidden $2\times 2$ pattern, then it becomes $M$;
\item if a cell has an occurrence of $M$ among its active neighbors, then it becomes $M$;
\item otherwise the state doesn't change.
\end{enumerate}
The CA $f_T$ has neighborhood $\{-1,0,1\}^2$ and its local rule can be algorithmically determined from $T$.

From this definition, it is clear that when $T$ halts starting from the empty tape, then $f_T$ admits a finite obstacle pattern: a frame encoding a halting computation surrounded by state $M$ like in Figure~\ref{fig:frame}.
By Lemma~\ref{lem:CofiniteIce}, $f_T$ does not trivialize any nontrivial product measure.

Now suppose that $T$ does not halt and consider any configuration $x$ with an occurrence of $M$ on each semi-axis, both for positive and negative coordinates.
The set of such configurations has full measure under any full-support product measure.
We want to show that $f_T^n(x)_{\vec 0} = M$ for some $n \geq 0$. Suppose that $x_{\vec 0} \neq M$ and consider the (finite) maximal rectangle $R\subseteq\Z^2$ whose interior contains the origin but no occurrence of state $M$ in $x$. We proceed by induction on the size of $R$.
If $R$ is $1 \times 1$, then the origin is surrounded by $M$-cells and becomes $M$ in one step.

Suppose then that the claim holds for all strictly smaller rectangles.
Each side of $R$ has a neighbor outside of $R$ in state $M$ (if not $R$ would not be maximal). Hence, if $R$ is not a valid finite frame encoding a halting configuration then in one step at least one cell in the interior of $R$ becomes $M$ (either because the local condition is violated somewhere or because some cell has an active neighbor in state $M$). Since $T$ does not halt, this means that some $M$ appears in the interior of $R$, and in $f_T(x)$ we either have the cell at the origin in state $M$, or a smaller maximal rectangle of non-$M$ states so we can conclude by induction.

We now construct a binary freezing CA $g_T$ based on $f_T$ that shares its trivialization properties. We use a standard block encoding. Let $N$ be large enough to recode any state of $f_T$ as a ${N\times N}$ block of $0$s and $1$s in the following way:
\begin{itemize}
\item $M$ is coded by a ${N\times N}$ block of $1$s;
\item any state in ${P\setminus\{M\}}$ is coded by an ${N\times N}$ block made of an outer ${N\times N}$ annulus of $1$s, an inner ${(N-2)\times(N-2)}$ annulus of $0$s, and inside them a uniquely defined ${(N-4)\times(N-4)}$ pattern of $0$s and $1$s.
\end{itemize}
A given ${N\times N}$ block over alphabet ${\{0,1\}}$ is called valid if it is one of the coding blocks above, and invalid otherwise. The dynamics of $g_T$ is the following:
\begin{itemize}
\item if a cell is not inside the central block of some $3N \times 3N$ pattern of nine valid blocks, then it turns into $1$;
\item otherwise, if the local rule of $f_T$ applied to the pattern $w \in P^{3 \times 3}$ encoded by the blocks yields $M$, then it turns into $1$;
\item otherwise the cell retains its state.
\end{itemize}
Note that by choice of the coding (frame of $0$s inside a frame of $1$s) there is always at most one way to find a valid block around a cell which is in state $0$, hence the second case of the dynamics above is well-defined. Moreover, if a cell in state $0$ inside a valid block turns into $1$, then the entire block turns into $1^{N \times N}$. Finally, by construction, on properly encoded configurations, $g_T$ exactly simulates $f_T$.

Therefore, if $T$ halts, then the block encoding of a valid frame of $f_T$ (containing a halting computation) surrounded by ${N\times N}$ blocks of $1$s clearly forms a finite obstacle pattern under the dynamics of $g_T$. Now suppose that $T$ does not halt and consider a configuration $x$ such that the maximal rectangle $R$ around the origin not containing any ${N\times N}$ block of $1$s is finite.
Again, such configurations have full measure for any full support Bernoulli measure.
Like for $f_T$ we prove by induction on the size of $R$ that $g_T^n(x)_{\vec 0} = 1$ for some $n \geq 0$. If $R$ doesn't contain any valid block then every cell it contains turns into $1$ in one step and we are done. If $R$ contains a valid block with an invalid neighborhood, it turns into a ${N\times N}$ block of $1$s in one step and we can apply the induction hypothesis. If $R$ is entirely made of valid blocks with valid neighborhoods of blocks, we can apply the analysis of $f_T$ and show that $g_T(x)$ has a smaller maximal rectangle.
\end{proof}

We note that in terms of the arithmetical hierarchy, the third condition in Lemma~\ref{lem:CofiniteIce} is $\Pi^0_1$, so the previous proof shows that the set of binary freezing CA that trivialize some nontrivial full-support Bernoulli measure is $\Pi^0_1$-complete.

\section{Two phase transitions in freezing CA}

In this section we exhibit a freezing CA which has two phase transitions.

We need percolation results for measures that are not quite independent, but do not have long-range dependencies. One way to do this, which we opt for here, is to couple measures with independent ones and then use percolation results for independent distributions. For this we use a general result from \cite{LiScSt97}. 

\begin{definition}
  For a map ${\phi:\{0,1\}^{\Z^2}\rightarrow \{0,1\}^{\Z^2}}$, the \emph{dependence neighborhood} ${N_\phi(\vec z)}$ at position ${\vec z\in\Z^2}$ is the minimal set ${N\subseteq \Z^2}$ such that ${\phi(x)_{\vec z}}$ is determined by $x_N$, i.e. ${x_N=y_N}$ implies ${\phi(x)_{\vec z}=\phi(y)_{\vec z}}$. We say $\phi$ is \emph{$k$-dependent} (${k\in\N}$) if for any $\vec z, \vec z'$ with ${\|\vec z-\vec z'\|_\infty>k}$ it holds ${N_\phi(\vec z)\cap N_\phi(\vec z')=\emptyset}$.
\end{definition}

\begin{definition}
  \label{def:pathsets}
  For $b \in \{0,1\}$ and $\vec z \in \Z^2$, denote by $P^+_b(\vec z)$ the set of configurations $x \in \{0,1\}^{\Z^2}$ such that there is an infinite path $(a_i)_{i \in \N}$ in $\Z^2$ such that $a_0 = \vec z$, $a_{i+1} - a_i \in \{(0,1), (1,1)\}$ and $x_{a_i} = b$ for all $i \geq 0$.
  Denote its complement by $P^-_b(\vec z)$.
  We may abbreviate $P^+_b(\vec z) = P^+_b$ and $P^-_b(\vec z) = P^-_b$ when $\vec z$ is clear from the context.
\end{definition}

\begin{lemma}
  \label{lem:lowerbernoulli}
  For any ${k\in\N}$, there is ${0<p_k<1}$ such that for any ${0<p\leq 1}$ and for any $k$-dependent map ${\phi:\{0,1\}^{\Z^2}\rightarrow \{0,1\}^{\Z^2}}$ verifying ${\mu_p(\phi^{-1}([1]_{\vec z}))>p_k}$ for all ${\vec z\in\Z^2}$ it holds:
  \begin{itemize}
  \item ${\mu_p(\phi^{-1}(P_0^-(\vec z)))=1}$
  \item ${\mu_p(\phi^{-1}(P_1^+(\vec z)))>0}$
  \end{itemize}
\end{lemma}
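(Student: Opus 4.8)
The plan is to work with the pushforward measure $\nu := \phi(\mu_p)$, so that the two claims read $\nu(P_0^-(\vec z)) = 1$ and $\nu(P_1^+(\vec z)) > 0$, and to exploit the one structural feature of $\nu$ we control: although $\nu$ need not be a product measure, the $k$-dependence of $\phi$ forces genuine independence among cells whose dependence neighborhoods are pairwise disjoint. The crucial geometric observation is that along any oriented path with steps in $\{(0,1),(1,1)\}$, two sites that are $m$ steps apart have $y$-coordinates differing by exactly $m$, hence $\ell^\infty$-distance at least $m$; thus selecting every $(k+1)$-th site of such a path yields a subfamily that is pairwise more than $k$ apart, and therefore jointly $\nu$-independent, being a function of disjoint blocks of the i.i.d.\ field $x$. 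I would define $p_k$ as the maximum of two thresholds arising from the two items.

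For the first item I would use a first-moment (union-bound) argument over finite paths. Fix $\vec z$ and $n$. There are $2^n$ oriented paths of $n$ steps from $\vec z$. For a fixed such path, pick the $m \geq n/(k+1)$ sites at step-positions $0, k+1, 2(k+1), \dots$; these are jointly independent and each carries a $0$ with probability at most $1 - p_k$, so the whole path is all-$0$ with $\nu$-probability at most $(1-p_k)^{n/(k+1)}$. Summing over paths gives $\nu(\exists\text{ all-}0\text{ path of }n\text{ steps from }\vec z) \le \big(2\,(1-p_k)^{1/(k+1)}\big)^n$. Choosing $p_k > 1 - 2^{-(k+1)}$ makes the base strictly less than $1$, so this bound tends to $0$; since $P_0^+(\vec z)$ is the intersection over $n$ of the events above, $\nu(P_0^+(\vec z)) = 0$, i.e.\ $\nu(P_0^-(\vec z)) = 1$.

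For the second item I would reduce to ordinary supercritical oriented site percolation via stochastic domination. The same disjointness argument gives the strong form of finite dependence: $\phi(x)_{\vec z}$ is independent of the entire family $\{\phi(x)_{\vec w} : \|\vec w - \vec z\|_\infty > k\}$, because $N_\phi(\vec z)$ is disjoint from the union of the neighborhoods $N_\phi(\vec w)$ over all such $\vec w$, and $x$ is i.i.d.\ on these disjoint blocks. This is exactly the hypothesis of the stochastic-domination theorem of Liggett, Schonmann and Stacey, so for $p_k$ close enough to $1$ the field $\nu$ dominates a product measure $\mu_q$ with $q$ arbitrarily close to $1$, in particular above the critical value $q_c$ for oriented site percolation with steps $\{(0,1),(1,1)\}$. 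The event $P_1^+(\vec z)$ is increasing, so domination yields $\nu(P_1^+(\vec z)) \ge \mu_q(P_1^+(\vec z)) > 0$, the last inequality being supercriticality in the i.i.d.\ model. Taking $p_k$ to be the larger of this threshold and $1 - 2^{-(k+1)}$ settles both items at once.

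The main obstacle is precisely that $\nu = \phi(\mu_p)$ is not a product measure, so neither the first-moment bound nor the percolation estimate applies to $\nu$ directly. Both are rescued by the same device, namely turning $k$-dependence into honest independence by spacing sites out in the $y$-direction, but the second item is the delicate one: one must verify that the definition of $k$-dependence supplies the joint (not merely pairwise) independence from all distant cells that the domination theorem requires, and one must choose the domination parameter $q$ above $q_c$ while keeping $p_k < 1$, which is possible since the domination parameter tends to $1$ as $p_k \to 1$. A self-contained alternative for the second item would be a block-renormalization (contour) argument in place of the cited domination theorem, again built on the same spacing trick.
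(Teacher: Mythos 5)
Your proposal is correct. For the second item it coincides with the paper's argument: the paper also observes that $\mu_p\circ\phi^{-1}$ is a $k$-dependent measure (precisely because the dependence neighborhoods of far-apart cells are disjoint and $\mu_p$ is a product measure), invokes the Liggett--Schonmann--Stacey domination theorem to dominate a supercritical product measure, and uses that $P_1^+$ is upper-closed. Where you genuinely diverge is the first item. The paper treats both items uniformly: it first settles the case $\phi=\id$ by classical oriented site percolation (taking $p>\max(p_c,1-p_c)$ so that $\mu_p(P_0^-)=1$ and $\mu_p(P_1^+)>0$ simultaneously), notes that $P_0^-$ is \emph{also} an upper-closed event, and then transfers both statements at once through the single domination step. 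You instead prove the first item by a direct first-moment bound on the pushforward measure, using the nice geometric fact that sites $k+1$ steps apart on an oriented path have $\ell^\infty$-distance at least $k+1$, hence jointly independent $\phi$-values, giving the bound $\bigl(2(1-p_k)^{1/(k+1)}\bigr)^n$ on the existence of an all-$0$ path of length $n$. This buys you a self-contained argument with an explicit threshold $p_k>1-2^{-(k+1)}$ for that item, at the cost of a separate computation; the paper's version is shorter and handles both items with one appeal to domination plus the standard critical-probability facts. Your verification that the two-set (joint, not merely pairwise) independence required by the domination theorem follows from disjointness of the neighborhood unions under a product measure is exactly the point the paper relies on as well.
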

\begin{proof}
  Let us first consider the case where $\phi$ is the identity map.
  Let $p_c = \sup\{p : \mu_p(P_1^+)=0\}$, which is the critical probability of directed percolation on the square lattice.
  It is a classical result from percolation theory that $0 < p_c < 1$.
  By symmetry between states $0$ and $1$, for ${p>\max(p_c,1-p_c)}$ both ${\mu_p(P_0^-)=1}$ and ${\mu_p(P_1^+)>0}$ hold.

  The general case follows from the results of~\cite{LiScSt97}. Simplifying the setting a bit to match our needs, we say a measure $\mu \in \meas(\{0,1\}^{\Z^2})$ is $k$-dependent if for any ${A,B\subseteq\Z^2}$ with ${\min \{ \|\vec z_A-\vec z_B\|_\infty \;|\; \vec z_A \in A, \vec z_B \in B \} > k}$, the random variables $x|_A$ and $x|_B$ are independent when $x \in \{0,1\}^{\Z^2}$ is drawn from $\mu$. On the other hand, we say $\mu$ dominates another measure $\mu' \in \meas(\{0,1\}^{\Z^2})$ if for any upper-closed measurable set $E$ (\textit{i.e.} ${x\in E}$ and ${x_{\vec z}\leq y_{\vec z}}$ for all $\vec z\in\Z^2$ implies $y\in E$) we have ${\mu(E)\geq\mu'(E)}$. Then~\cite[Theorem 0.0]{LiScSt97} implies that for any $k \geq 0$ and ${0<p<1}$, if $\mu \in \meas(\{0,1\}^{\Z^2})$ is a $k$-dependent measure and $\min \{ \mu([1]_{\vec z}) \;|\; \vec z \in \Z^2 \}<1$ is large enough, then $\mu$ dominates the product measure $\mu_p$.
  Note that $\mu$ need not be shift-invariant.

  For any $k$-dependent map $\phi$ and any product measure $\mu_p$ it is the case that $\mu_p\circ\phi^{-1}$ is $k$-dependent. The domination result above allows to conclude since both $P_0^-$ and $P_1^ +$ are upper-closed sets.
\end{proof}

\begin{theorem}
\label{thm:weird-nilpotent}
There exists a freezing CA $f$ and Bernoulli measures $\mu_{\epsilon_1}$ and $\mu_{\epsilon_2}$ and $\mu_{\epsilon_3}$ on $\{0,1\}^{\Z^2}$ such that $0<\epsilon_1 < \epsilon_2< \epsilon_3<1$, and $f$ trivializes $\mu_{\epsilon_1}$ and $\mu_{\epsilon_3}$ but not $\mu_{\epsilon_2}$.
\end{theorem}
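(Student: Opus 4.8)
The plan is to construct a non-monotone freezing CA exhibiting \emph{reentrance}: it should trivialize $\mu_p$ for $p$ below some threshold $p^\ast$ but fail to trivialize it for $p$ above $p^\ast$, so that we may take any $\epsilon_1 < p^\ast < \epsilon_2$. Such behavior is impossible in the monotone world---a monotone coupling $\mu_p \leq \mu_q$ together with $f^n(x) \leq f^n(y)$ for $x \leq y$ shows that a monotone freezing $f$ trivializing $\mu_p$ also trivializes every $\mu_q$ with $q \geq p$---so the construction must crucially exploit non-monotonicity. The mechanism that turns trivialization \emph{off} as $p$ grows will be a configuration of $1$s that the local rule reads as a barrier rather than as fuel for growth.

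Concretely, I would combine two competing processes in a single rule. The first is a \emph{filling} process that is supercritical in the sense of Gravner--Griffeath: from sparse seeds it nucleates and spreads $1$s so that, absent interference, a single seed eventually fills the plane. Since a nontrivial Bernoulli configuration almost surely contains seeds on every axis, this process alone would trivialize every $\mu_p$ with $p > 0$. The second is a \emph{jamming} process: a non-monotone rule that, upon detecting a sufficiently dense local pattern of $1$s, freezes certain $0$-cells permanently (the rule declines to fill them, which is consistent with freezing, as only the transition $0 \to 1$ is constrained). The heart of the argument is to tie the activation of the jam to an oriented percolation threshold $p^\ast$, so that below $p^\ast$ the jamming structures are almost surely finite and get overrun by the fill, while above $p^\ast$ they percolate and trap a positive density of $0$s forever. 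Standard facts that $0 < p^\ast < 1$ then guarantee that both regimes are nonempty and yield the required $0 < \epsilon_1 < \epsilon_2 < 1$.

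The analysis of the two regimes would run as follows. For $p < p^\ast$ I would argue that every jamming structure that appears is finite and that finite structures self-destruct: exactly as in Example~\ref{ex:simpleca}, a K\H{o}nig's-lemma argument shows that a $0$-cell can survive forever only if it lies on an \emph{infinite} obstacle, so once all obstacles are finite the fill reaches every cell and, by Lemma~\ref{lem:percolationasmunilpotency}, $f$ trivializes $\mu_p$; to push the fill through the $k$-dependent noise created by the dynamics I would invoke the domination result Lemma~\ref{lem:lowerbernoulli}. For $p > p^\ast$ I would exhibit an almost-surely present infinite percolating configuration of $1$s which, read by the non-monotone rule, permanently shields a positive-density set of $0$-cells; since these never become $1$, Lemma~\ref{lem:percolationasmunilpotency} shows $f$ does not trivialize $\mu_p$.

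The main obstacle---and the delicate design choice on which everything hinges---is engineering a single finite-radius rule in which these two behaviors coexist without contaminating each other: finite walls must dissolve under the filling dynamics (so low density fills completely), yet an infinite wall must stay impervious (so high density jams), and the advancing fill must not inadvertently recreate jamming patterns in its wake. In the binary alphabet a growing region of $1$s and a barrier of $1$s are hard to tell apart, so I expect to realize the richer rule first over a larger state set---with explicit ``fill'' and ``wall'' states and a pressure mechanism that erodes finite walls but not infinite ones---and then transport it to $\{0,1\}^{\Z^2}$ by a block-encoding analogous to the one used in the undecidability result above, coding the maximal/filled state by the all-$1$ block. Checking that the encoded rule remains freezing and that both percolation estimates survive the encoding is where the bulk of the technical work will lie.
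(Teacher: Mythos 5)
Your high-level strategy is the right one---a non-monotone rule that reads the local density of $1$s and uses an oriented-percolation threshold, with Lemma~\ref{lem:lowerbernoulli} supplying the domination needed to compare the induced $k$-dependent field with a Bernoulli measure, and Lemma~\ref{lem:percolationasmunilpotency} converting the percolation statements into (non-)trivialization. But the proposal stops exactly where the proof has to start: you explicitly defer ``engineering a single finite-radius rule in which these two behaviors coexist without contaminating each other,'' and that engineering is the entire content of the theorem. Moreover, the architecture you sketch has an unresolved self-consistency problem. You want a filling process that is supercritical from sparse seeds (so that it alone would trivialize every $\mu_p$ with $p>0$) together with a jamming mechanism, made of $1$s, that stops it at high density. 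Since the fill itself produces $1$s, a genuinely context-independent supercritical fill cannot be stopped by any pattern of $1$s without the rule distinguishing ``fill $1$s'' from ``wall $1$s''; in the binary alphabet this forces exactly the kind of delicate bookkeeping you acknowledge you have not done, and it is not clear the low-density regime (finite walls must dissolve, and the advancing fill must not regenerate walls) can be closed by a K\H{o}nig's-lemma argument alone.

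The paper resolves this with a different and cleaner design that you may want to compare against. The filling process $h$ is a block-scaled version of Example~\ref{ex:simpleca} that needs entire $2N\times N$ all-$1$ rectangles as fuel, so on a raw Bernoulli configuration with density bounded away from $1$ it essentially never fires; there is no supercritical fill to be stopped. The non-monotone ingredient is a one-shot ignition $g$: a cell is boosted to $1$ only when the empirical density in a large window lies in a narrow interval around $\epsilon_1$ \emph{and} no large all-$1$ block is already nearby. The second condition is the inhibition that makes $g$ idempotent along the orbit ($f^n = h^n\circ g$, Claim~\ref{cl:idemp-g}), so the dynamics after time one is exactly a block-scale oriented percolation on $g(x)$, and Lemma~\ref{lem:lowerbernoulli} applies to the two $5$-dependent block fields: at $\epsilon_1$ the boost fires almost everywhere and the all-$1$-block field is supercritical ($P_0^-$ has full measure), while at $\epsilon_2$ the boost never fires, the $0$-corner field is supercritical ($P_1^+$ has positive measure), and a positive-probability set of cells stays $0$ forever. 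In short: no competition between two ongoing processes, hence no contamination to control. As written, your proposal identifies the correct tools but lacks the decoupling idea (density-window detection plus block inhibition plus idempotence of the boost) that makes the construction provable, so it is not yet a proof.
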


The CA of the above theorem can only turn 0s into 1s, and starting from a $\mu_{\epsilon_1}$-random configuration it will converge towards the all-$1$ configuration, but starting from a $\mu_{\epsilon_2}$-random configuration, which has a strictly higher density of 1s, it will leave some cells in state 0 forever.
To understand the fundamental use of non-monotony in the construction and resolve this apparent paradox, the basic idea is the following: If a freezing CA has a large enough neighborhood, it can locally ``see'' a good enough approximation of the density of 1s in the initial configuration.
Then, if this local estimate of density is close to $\epsilon_1$ it can produce a lot of 1s locally, while if it is close to $\epsilon_2$ it doesn't. 

This density jump when starting from $\epsilon_1$ is chosen so that it passes the critical probability of some percolation process (a modification of Example~\ref{ex:simpleca}), while the system stays below this critical probability when starting close enough to $\epsilon_2$.
There are thus two processes going on: a density modification and a percolation process.
Our main trick is to use a block encoding to avoid interactions between the two.
A block of 1s encodes a single cell in state $1$ of the percolation process of Example~\ref{ex:simpleca}, and, when such a block appears in the neighborhood, the density modification is inhibited.
Thanks to this trick, the CA behaves as if the density modification was only applied once at the initial step, and then successive steps just reproduce the percolation process on a modified initial configuration.
Note that the existence of $\epsilon_3$ is granted by Lemma~\ref{lem:CofiniteIce} just because $f$ does trivialize some full support measure.

Recall Hoeffding's inequality~\cite{Ho63}, which will be used to quantify the local estimated density of 1s: if $\mathbb{P}(n,p,\epsilon)$ is the probability that the average of $n$ binary Bernoulli trials, where the probability of $1$ is $p$, does not lie in $[p-\epsilon, p+\epsilon]$, then
\begin{equation}
  \label{eq:technicalbernoulli}
  \mathbb{P}(n,p,\epsilon) \leq 2 \exp \left( -2 \epsilon^2 n \right )
\end{equation}
for all $\epsilon > 0$.

\begin{proof}[Proof of Theorem~\ref{thm:weird-nilpotent}]
We first define the automaton $f$ using some undefined parameters, and then show that it is correct for some choice of said parameters. The parameters are $N \in \N$ and $\epsilon_1, \epsilon_2, \delta \in \R$ subject to $0 < \delta < \epsilon_1 < \epsilon_2 - \delta$ and $\epsilon_2 < 1$, so that $f = f_{N, \epsilon_1, \epsilon_2, \delta}$. We will define it in two phases, such that $f = h \circ g$ for some other CA $g$ and $h$.

We first define an auxiliary CA $g'$ by
\[
  g'(x)_{\vec 0} =
  \begin{cases}
    1, & \text{if $x_{\vec z + [0,N-1]^2} \not\equiv 1$ for all $\vec z \in [-4N,3N]^2$} \\
    & \text{and $|x_{[-N,N]^2}|_1 / (2N+1)^2 \in (\epsilon_1 - \delta, \epsilon_1 + \delta)$,} \\
    0, & \text{otherwise.}
  \end{cases}
\]
The role of $g'$ is to indicate regions in which the density modification process should take place.
The CA $g$, which implements said process, is then defined by
\[
  g(x)_{\vec 0} =
  \begin{cases}
    1, & \text{if $x_{\vec 0} = 1$ or $g'(x)_{\vec z + [0,N-1]^2} \equiv 1$} \\
    & \text{for some $\vec z \in [-N+1,0]^2$,} \\
    0, & \text{otherwise.}
  \end{cases}
\]
The CA $h$, which implements the percolation process, is defined by
\[
  h(x)_{\vec 0} =
  \begin{cases}
    1, & \text{if $x_{\vec 0} = 1$ or $x_{\vec z + [0,2N-1] \times [N,2N-1]} \equiv 1$} \\
    & \text{for some $\vec z \in [-N+1,0]^2$,} \\
    0, & \text{otherwise.}
  \end{cases}
\]

\begin{claim}
  \label{cl:idemp-g}
  For all $n \geq 0$ we have $g \circ h^n \circ g = h^n \circ g$.
\end{claim}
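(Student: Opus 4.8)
The plan is to reduce the identity to a statement about a single application of $g$ and then to control where $g$ can still act after $g$ and $h$ have run. Write $x = h^n(g(y))$; the claim says exactly that $g$ fixes $x$. Since every automaton here is freezing, $g(x) \geq x$, so $g(x) = x$ iff $g$ turns no $0$ into a $1$, which by the definition of $g$ happens iff no $N\times N$ all-$1$ block of $g'(x)$ covers a $0$-cell of $x$. I first observe that it suffices to rule out $N\times N$ all-$1$ blocks of $g'(x)$ \emph{entirely}: if $g'(x)$ were identically $1$ on a block $B = \vec p + [0,N-1]^2$, then the first clause of $g'$ at any $\vec c \in B$ forbids an all-$1$ $N\times N$ block of $x$ near $\vec c$, so in particular $x|_B \not\equiv 1$ and $g$ would genuinely change a cell of $B$. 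Thus the whole claim becomes: $g'(h^n(g(y)))$ contains no $N\times N$ all-$1$ block.

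Next I would prove a structural lemma: every newly-created $1$ sits next to a block. Precisely, if $x_{\vec d} = 1$ but $y_{\vec d} = 0$, then $x$ has an $N\times N$ all-$1$ block whose lower-left corner lies in $\vec d + [-N,N]^2$. I would argue by examining the first time step at which $\vec d$ switches to $1$. If it is switched by the initial $g$, the definition of $g$ yields an all-$1$ $N\times N$ block of $g'(y)$ covering $\vec d$, which $g$ fills in, giving the desired block with corner in $\vec d + [-N+1,0]^2$. If it is switched by some application of $h$, the local rule of $h$ exhibits a $2N\times N$ all-$1$ block of the current configuration just above $\vec d$, with corner in $\vec d + [-N+1,0]\times[1,N]$, inside which sits an $N\times N$ all-$1$ block in the same range. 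In both cases I use freezing: the chain $y \leq g(y) \leq h(g(y)) \leq \cdots \leq x$ is increasing, so any block present at an intermediate stage persists in $x$.

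Then I would combine the two. Suppose toward a contradiction that $g'(x) \equiv 1$ on a block $B = \vec p + [0,N-1]^2$, and fix $\vec c \in B$. The block-clause of $g'$ at $\vec c$ says $x$ has no $N\times N$ all-$1$ block with corner in $\vec c + [-4N,3N]^2$. If some cell $\vec d \in \vec c + [-N,N]^2$ were added, the structural lemma would place a block with corner in $\vec d + [-N,N]^2 \subseteq \vec c + [-2N,2N]^2 \subseteq \vec c + [-4N,3N]^2$, a contradiction; hence $x = y$ on the density window $\vec c + [-N,N]^2$. Therefore the $1$-density of $y$ there equals that of $x$, which lies in $(\epsilon_1 - \delta, \epsilon_1 + \delta)$, and since $y \leq x$ the absence of $x$-blocks near $\vec c$ gives absence of $y$-blocks as well. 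Both clauses of $g'$ hold for $y$ at $\vec c$, so $g'(y)_{\vec c} = 1$; as $\vec c \in B$ was arbitrary, $g'(y)|_B \equiv 1$. Then $g$ fills $B$, giving $g(y)|_B \equiv 1$, and freezing yields $x|_B \equiv 1$. Taking $\vec z = \vec 0$ in the block-clause of $g'(x)$ at $\vec p$, this contradicts $x_{\vec p + [0,N-1]^2} \not\equiv 1$. Hence $g'(x)$ has no all-$1$ $N\times N$ block and $g(x) = x$.

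The main obstacle is the bookkeeping of window sizes: I must check that whenever an added cell lies in the density window $\vec c + [-N,N]^2$, the block it carries (corner within $\ell_\infty$-distance $N$) is forced into the scanning window $\vec c + [-4N,3N]^2$, which holds because $[-2N,2N]^2 \subseteq [-4N,3N]^2$, and that the witnesses produced by $g$ and by $h$ genuinely fall within distance $N$ of the cell they create. The asymmetry of $[-4N,3N]^2$ is comfortably sufficient for both. A secondary point to keep straight is that the $g'$-condition is genuinely \emph{non-monotone} in the configuration, so the identity cannot be obtained by a naive monotonicity argument or induction on $n$; it is the persistence of \emph{blocks} under freezing, rather than any monotonicity of $g'$ itself, that drives the proof and makes the same argument work uniformly for every $n$.
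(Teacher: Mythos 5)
Your proof is correct, and it reaches the claim by a genuinely different decomposition than the paper's. The paper introduces a \emph{niceness} invariant (no $0$-cell of $w$ is covered by an all-$1$ $N\times N$ block of $g'(w)$), proves that $g(x)$ is always nice, that niceness is preserved by $h$, and that nice configurations are fixed by $g$; it therefore only ever compares two consecutive configurations in the orbit. You instead compare the endpoint $x = h^n(g(y))$ directly with the initial configuration $y$, via the structural lemma that any cell turned to $1$ at any point of the trajectory leaves behind a persistent all-$1$ $N\times N$ block whose corner lies within $\|\cdot\|_\infty$-distance $N$ of that cell. Both arguments hinge on the same mechanism --- new $1$s arrive only in large batches, which trip the block-exclusion clause of $g'$ and thereby protect the density windows --- and on the same window arithmetic ($[-2N,2N]^2 \subseteq [-4N,3N]^2$, with the witnesses produced by $g$ and $h$ landing within distance $N$ of the changed cell, as you verify). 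What your route buys is a single uniform persistence lemma covering $g$-steps and $h$-steps alike, plus the explicit and slightly stronger conclusion that the density window of any cell where $g'$ fires at time $n$ is literally identical to its state at time $0$; what the paper's route buys is a time-local invariant that never refers back to the initial configuration. Your closing contradiction (deriving $g'(y)|_B \equiv 1$, hence $g(y)|_B \equiv 1$, hence $x|_B \equiv 1$ by freezing, against the block clause of $g'(x)$ at $\vec p$ with offset $\vec 0$) is sound, and your remark that the non-monotonicity of $g'$ forces one to track persistence of \emph{blocks} rather than monotonicity of $g'$ itself correctly identifies the crux that both proofs share.
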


In particular, $f^n = h^n \circ g$ for any integer ${n\geq 1}$.

\begin{proof}
  Say that a configuration $x \in \{0,1\}^{\Z^2}$ is \emph{nice} if for all $\vec z \in \Z^2$ such that $x_{\vec z} = 0$, we have $g'(x)_{\vec v + [0,N-1]^2} \not\equiv 1$ for all $\vec v \in \vec z - [0,N-1]^2$. It is now enough to prove for all $x \in \{0,1\}^{\Z^2}$ that
  \begin{enumerate}
  \item $g(x)$ is nice,
  \item if $x$ is nice, then $x = g(x)$, and
  \item if $x$ is nice, then $h(x)$ is nice.
  \end{enumerate}
  The second item is clear by the definition of $g$. We prove the first and third items using the fact that $g$ and $h$ only change cells by creating large batches of $1$s, which suppress the density modification process.
  The proof is illustrated in Figure~\ref{fig:idemp-g}.

  For the first item, suppose for a contradiction that $g(x)$ is not nice: for some $\vec z \in \Z^2$ and $\vec z_1 \in \vec z - [0, N-1]^2$ we have $g(x)_{\vec z} = 0$ and $g'(g(x))_{\vec z_1 + [0,N-1]^2} \equiv 1$ (the higher dashed square in Figure~\ref{fig:idemp-g}).
  By the definition of $g$, we in particular have $g'(x)_{\vec z_1 + [0,N-1]^2} \not\equiv 1$, so there is some $\vec z_2 \in \vec z_1 + [0, N-1]^2$ with $g'(x)_{\vec z_2} = 0$ and $g'(g(x))_{\vec z_2} = 1$.
  The latter equation implies
  \begin{equation}
    \label{eq:no-blobs}
    g(x)_{\vec v + [0,N-1]^2} \not\equiv 1
  \end{equation}
  for all $\vec v \in \vec z_2 + [-4N, 3N]^2$, and since $g$ is freezing, the same holds for $x$ in place of $g(x)$.
  Thus the density of 1s in $g(x)_{\vec z_2 + [-N,N]^2}$ (the solid square in Figure~\ref{fig:idemp-g}) lies in $(\epsilon_1-\delta, \epsilon_1+\delta)$, but that of $x_{\vec z_2 + [-N,N]^2}$ does not.
  Hence $g(x)_{\vec z_3} \neq x_{\vec z_3}$ for some $\vec z_3 \in \vec z_2 + [-N, N]^2$.
  This, in turn, implies that there is some ${\vec z_4 \in \vec z_3+[-N+1,0]^2}$ such that ${g'(x)_{\vec z_4+[0,N-1]^2} \equiv 1}$ (the lower dashed square in Figure~\ref{fig:idemp-g}).
  Now we in fact have ${g(x)_{\vec z_4 + [0, N-1]^2} \equiv 1}$.
  This contradicts~\eqref{eq:no-blobs} since $\vec z_4 \in \vec z_2 + [-2N+1, N]^2$.

  For the third item, suppose for a contradiction that $x$ is nice but $h(x)$ is not: for some $\vec z \in \Z^2$ and $\vec z_1 \in \vec z - [0, N-1]^2$ we have $h(x)_{\vec z} = 0$ and $g'(h(x))_{\vec z_1 + [0,N-1]^2} \equiv 1$.
  Then $x_{\vec z} = 0$, and by niceness ${g'(x)_{\vec z_1+[0,N-1]^2} \not\equiv 1}$.
  Take $\vec z_2 \in \vec z_1+[0,N-1]^2$ with $g'(x)_{\vec z_2} = 0$.
  As in the previous paragraph, we have
  \begin{equation}
    \label{eq:no-blobs2}
    h(x)_{\vec v + [0,N-1]^2} \not\equiv 1
  \end{equation}
  for all $\vec v \in \vec z_2 + [-4N, 3N]^2$ and similarly for $x$, and $h(x)_{\vec z_3} \neq x_{\vec z_3}$ for some $\vec z_3 \in \vec z_2 + [-N, N]^2$.
  By the definition of $h$, there now exists ${\vec z_4 \in \vec z_3+[-N+1,0]^2}$ with ${x_{\vec z_4 + [0,2N-1]\times [N,2N-1]} \equiv 1}$ (the dashed rectangle in Figure~\ref{fig:idemp-g}).
  By choosing ${\vec v=\vec z_4 + (0,N)}$ and since $h$ is freezing this yields ${h(x)_{\vec v + [0,N-1]^2}\equiv 1}$.
  However $\vec v \in \vec z_2 + [-2N+1, 2N]^2$, contradicting \eqref{eq:no-blobs2}.
\end{proof}

\begin{figure}[htp]
  \centering
  \begin{tikzpicture}[scale=2]

    \fill[black] (0,0) circle (0.03cm);
    \node [right] at (0,0) {$\vec{z}$};

    \fill[black] (-0.5,-0.75) circle (0.03cm);
    \node [below] at (-0.5,-0.75) {$\vec{z}_1$};
    \draw [dashed] (-0.5,-0.75) rectangle ++(1,1);

    \fill[black] (0.25,-0.5) circle (0.03cm);
    \node [left] at (0.25,-0.5) {$\vec{z}_2$};
    \draw (-0.75,-1.5) rectangle ++(2,2);

    \fill[black] (1.1,0.35) circle (0.03cm);
    \node [left] at (1.1,0.35) {$\vec{z}_3$};

    \fill[black] (0.75,-0.2) circle (0.03cm);
    \node [below] at (0.75,-0.2) {$\vec{z}_4$};
    \draw [dashed] (0.75,-0.2) rectangle ++(1,1);
    \draw [dashed] (0.75,0.8) -- ++(0,1) -- ++(2,0) -- ++(0,-1) -- ++(-1,0);

    \fill[black] (0.75,0.8) circle (0.03cm);
    \node [left] at (0.75,0.8) {$\vec{v}$};

  \end{tikzpicture}
  \caption{An illustration of the proof of Claim~\ref{cl:idemp-g}.}
  \label{fig:idemp-g}
\end{figure}

Define two functions $A$ and $B$ from $\{0,1\}^{\Z^2}$ to itself by
\begin{align*}
  A(x)_{(a,b)} & = \begin{cases}
    1, & \text{if $g(x)_{(aN, bN)} = 0$,} \\
    0, & \text{otherwise,}
  \end{cases} \\
  B(x)_{(a,b)} & = \begin{cases}
    1, & \text{if $g(x)_{(aN, bN) + [0,N-1]^2} \equiv 1$,} \\
    0, & \text{otherwise.}
  \end{cases}
\end{align*}
Since the radius of $g$ is at most $5N$, the two functions $A$ and $B$, when seen as random variables over a Bernoulli measure on $\{0,1\}^{\Z^2}$, are $5$-dependent as defined in Lemma~\ref{lem:lowerbernoulli}. Following the notations of Definition~\ref{def:pathsets} we make the following claim.

\begin{claim}
  \label{cl:paths}
  Let $x \in \{0,1\}^{\Z^2}$.
  \begin{itemize}
  \item If ${A(x) \in P_1^+(\vec 0)}$ then ${f^t(x)_{\vec 0}=0}$ for all $t$. 
  \item If ${B(x) \in P_0^-(\vec 0)}$ then ${f^t(x)_{\vec 0}=1}$ for some $t$.
  \end{itemize}
\end{claim}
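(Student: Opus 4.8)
The plan is to reduce both items to the oriented-percolation analysis of Example~\ref{ex:simpleca}, exploiting the factorization $f^t = h^t \circ g$ recorded after Claim~\ref{cl:idemp-g}. Writing $y = g(x)$, the quantity $A(x)_{(a,b)}$ records whether $y$ has a $0$ at the grid point $(aN,bN)$, while $B(x)_{(a,b)}$ records whether $y$ carries a full $N\times N$ block of $1$s anchored at $(aN,bN)$. The two items then mirror the two items of Example~\ref{ex:simpleca}: an infinite up/up-right path of $0$s blocks the percolation implemented by $h$, whereas the absence of such a path forces $h$ to fill the origin. The crux of the argument is two geometric facts about the window $\vec z + [0,2N-1]\times[N,2N-1]$, $\vec z\in[-N+1,0]^2$, queried by $h$.

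For the first item, suppose $A(x)\in P_1^+(\vec 0)$, so there is a path $(a_i)_{i\in\N}$ with $a_0=\vec 0$, $a_{i+1}-a_i\in\{(0,1),(1,1)\}$ and $y_{N a_i}=0$ for all $i$. I would prove by induction on $t$ that $h^t(y)_{N a_i}=0$ for every $i$ and every $t\ge 0$. The key observation, to be checked by a direct computation with the index ranges, is that for both admissible steps the successor point $N a_{i+1}=N a_i+\vec d$ with $\vec d\in\{(0,N),(N,N)\}$ lies inside \emph{every} window $N a_i+\vec z+[0,2N-1]\times[N,2N-1]$, $\vec z\in[-N+1,0]^2$. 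Consequently, as long as $h^t(y)_{N a_{i+1}}=0$ (the induction hypothesis), no such window can be all $1$s, so $h^{t+1}(y)_{N a_i}=0$. Taking $i=0$ yields $h^t(y)_{\vec 0}=0$ for all $t$, and since $g$ is freezing and $y_{\vec 0}=0$ forces $x_{\vec 0}=0$, the case $t=0$ is also covered; hence $f^t(x)_{\vec 0}=0$ for all $t$.

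For the second item, the companion geometric fact is that the window anchored to fill a given block equals the union of its two ``parent'' blocks: choosing $\vec z=(-i,-j)$ for the cell $(aN+i,bN+j)$ shows that $(aN,bN)+[0,2N-1]\times[N,2N-1]$ is exactly the union of the blocks at coarse positions $(a,b+1)$ and $(a+1,b+1)$. Thus if those two blocks are full, the block at $(a,b)$ becomes full after one application of $h$. Letting $B_t(a,b)$ indicate that the block at $(a,b)$ is full in $h^t(y)$, this fact together with the freezing property of $h$ and the monotonicity of $f_{\{(0,1),(1,1)\}}$ gives $B_t \ge f_{\{(0,1),(1,1)\}}^t(B(x))$ pointwise, by induction on $t$. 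If $B(x)\in P_0^-(\vec 0)$, then by Example~\ref{ex:simpleca} we have $f_{\{(0,1),(1,1)\}}^t(B(x))_{\vec 0}=1$ for some $t$, so $B_t(\vec 0)=1$; that is, the block at the origin of $h^t(y)=f^t(x)$ is full, and in particular $f^t(x)_{\vec 0}=1$ (taking $t\ge 1$, with the degenerate case $B(x)_{\vec 0}=1$ handled by freezing).

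The routine part is purely bookkeeping with the index ranges, together with matching the two windows to the oriented-percolation neighborhood $\{(0,1),(1,1)\}$. I expect the main obstacle to be stating the domination $B_t\ge f_{\{(0,1),(1,1)\}}^t(B(x))$ cleanly, since the genuine $h$-dynamics may also create partial, non-block patterns of $1$s that have no counterpart in the coarse percolation; the point is that we only need a one-sided comparison, so such partial fillings can be safely ignored.
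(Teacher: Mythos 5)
Your proof is correct and follows essentially the same route as the paper's: the same two window computations (the successor point $N a_{i+1}$ lies in every window queried at $N a_i$, and the window at a cell of block $(a,b)$ with offset $(-i,-j)$ is exactly the union of the two parent blocks $(a,b+1)$ and $(a+1,b+1)$), combined with the factorization $f^t = h^t \circ g$ from Claim~\ref{cl:idemp-g}. The only cosmetic difference is in the second item, where you package the K\H{o}nig's-lemma step as a pointwise domination $B_t \geq f_{\{(0,1),(1,1)\}}^t(B(x))$ and invoke Example~\ref{ex:simpleca}, while the paper directly tracks the decreasing path-length bound $\beta$; these are the same argument.
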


\begin{proof}
  If ${A(x) \in P_1^+}$ then there is an infinite path ${(\vec z_i)_{i\in\N}}$ as in Definition~\ref{def:pathsets} with ${A(x)_{\vec z_i}=1}$ for all ${i\in\N}$ and $\vec z_0 = \vec 0$. Let us show by induction on $t$ that ${h^t(g(x))_{N\vec z_i}=0}$ for all ${i\in\N}$. The first item follows from this by Claim~\ref{cl:idemp-g}.
  By hypothesis and by definition of $A$ it is true for ${t=0}$. Suppose now that it holds for some $t \geq 0$.
  For any ${i\in\N}$ we have ${N\vec z_{i+1}\in N\vec z_i + \{(0,N),(N,N)\}}$.
  By hypothesis ${h^t(g(x))_{N\vec z_i}=h^t(g(x))_{N\vec z_{i+1}}=0}$, so we deduce ${h^{t+1}(g(x))_{N\vec z_i}=0}$ by the definition of $h$.

  Suppose that ${B(x) \in P_0^-}$ and consider finite paths $(\vec z_i)_{0 \leq i \leq m}$ with $\vec z_0 = \vec 0$ and $\vec z_{i+1} - \vec z_i \in \{(0,1), (1,1)\}$ for each $0 \leq i < m$.
  By K\H{o}nig's lemma there is a bound $\beta(x)$ on the length of those paths with ${B(x)_{\vec z_i}=0}$ for all ${0\leq i\leq m}$. By definition of $B$ and $h$, we have $h(g(x)) \in P_0^-$ and $\beta(h(g(x))) \leq \beta(x)-1$ (recall that $g(h(g(x))) = h(g(x))$ by Claim~\ref{cl:idemp-g}). By immediate induction we have ${f^{\beta(x)}(x)_{\vec 0}=h^{\beta(x)}(g(x))_{\vec 0}=1}$ and the second item of the claim follows.
\end{proof}

In order to apply Lemma~\ref{lem:lowerbernoulli} simultaneously to $A$ and $B$, we need to choose the parameters of our construction so that the marginals of both ${\mu_{\epsilon_2} A^{-1}([1])}$ and ${\mu_{\epsilon_1} B^{-1}([1])}$ are close enough to $1$. We claim that it is possible.

\begin{claim}
  \label{cl:params}
  For any $0<\epsilon<1$ there are parameters $N$, $\epsilon_1$, $\epsilon_2$ and $\delta$ such that we have simultaneously ${\mu_{\epsilon_2} A^{-1}([1]_{\vec z}) > 1-\epsilon}$ and ${\mu_{\epsilon_1} B^{-1}([1]_{\vec z})>1-\epsilon}$ for all ${\vec z\in\Z^2}$.
\end{claim}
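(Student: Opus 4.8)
The plan is to estimate the two marginal probabilities separately. In each case I bound the ``bad'' event by a union bound over polynomially many positions at which the density condition defining $g'$ is violated in the relevant direction, and I control each such position by Hoeffding's inequality~\eqref{eq:technicalbernoulli}. Since $\mu_p$ is shift-invariant and $g$ is shift-equivariant (hence so are $A$ and $B$), the marginals $\mu_p(A^{-1}([1]_{\vec z}))$ and $\mu_p(B^{-1}([1]_{\vec z}))$ do not depend on $\vec z$, so it suffices to treat $\vec z = \vec 0$ throughout.

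For the $A$-marginal under $\mu_{\epsilon_2}$: since $A(x)_{\vec 0} = 1$ exactly when $g(x)_{\vec 0} = 0$, I bound the complementary event $g(x)_{\vec 0} = 1$. By the definition of $g$ this forces either $x_{\vec 0} = 1$, of probability $\epsilon_2$, or $g'(x)_{\vec z + [0,N-1]^2} \equiv 1$ for some $\vec z \in [-N+1,0]^2$. For a single cell $\vec w$ the event $g'(x)_{\vec w} = 1$ requires the density of $x_{\vec w + [-N,N]^2}$ to lie in $(\epsilon_1 - \delta, \epsilon_1 + \delta)$; under $\mu_{\epsilon_2}$ the mean density is $\epsilon_2$, which sits a fixed distance $\epsilon_2 - \epsilon_1 - \delta > 0$ above this window, so Hoeffding bounds the probability by $2\exp(-2(\epsilon_2 - \epsilon_1 - \delta)^2 (2N+1)^2)$. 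A union bound over the $O(N^2)$ relevant positions then gives $\mu_{\epsilon_2}(g(x)_{\vec 0} = 1) \le \epsilon_2 + o(1)$ as $N \to \infty$. Note that since $g$ is freezing, $A(x)_{\vec 0} = 1$ forces $x_{\vec 0} = 0$, so this marginal is at most $1 - \epsilon_2$; this is why $\epsilon_2$ must be taken small, and it is essentially the only constraint coming from the $A$-side.

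For the $B$-marginal under $\mu_{\epsilon_1}$: the key observation is that $g'(x)_{[0,N-1]^2} \equiv 1$ already implies $B(x)_{\vec 0} = 1$, because for every $\vec v \in [0,N-1]^2$ the choice $\vec z = \vec 0 \in \vec v + [-N+1,0]^2$ is a valid witness in the definition of $g$, so $g(x)_{\vec v} = 1$ and hence $g(x)_{[0,N-1]^2} \equiv 1$. It therefore suffices to bound the probability that $g'(x)_{\vec w} = 0$ for some $\vec w \in [0,N-1]^2$. A single cell has $g'(x)_{\vec w} = 0$ only if its density window fails --- a deviation of at least $\delta$ from the mean $\epsilon_1$, bounded by $2\exp(-2\delta^2(2N+1)^2)$ --- or if some $N \times N$ all-$1$ block occurs in $\vec w + [-4N,3N]^2$, which for a fixed block has probability $\epsilon_1^{N^2}$ and over $O(N^2)$ positions is negligible. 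Union bounding over the $N^2$ cells $\vec w$ yields $\mu_{\epsilon_1}(B^{-1}([1]_{\vec 0})) \ge 1 - o(1)$ as $N \to \infty$.

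Finally I fix the parameters in the order forced by these estimates: given $\epsilon$, first choose $\epsilon_2 < \epsilon$, then $\epsilon_1$ and $\delta$ with $0 < \delta < \epsilon_1 < \epsilon_2 - \delta$ and a fixed positive gap $\epsilon_2 - \epsilon_1 - \delta$ (for instance $\delta = \epsilon_2/4$, $\epsilon_1 = \epsilon_2/2$), and only then take $N$ large enough that both $o(1)$ error terms fall below $\epsilon/2$. The main point to get right is exactly this order of quantifiers: the two Hoeffding rates $(\epsilon_2 - \epsilon_1 - \delta)^2$ and $\delta^2$ must be frozen as fixed positive constants \emph{before} letting $N$ grow, so that the exponential decay in $N^2$ overwhelms the polynomially many positions appearing in each union bound. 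No individual step is deep; the care lies in verifying that a single large $N$ makes both inequalities hold simultaneously.
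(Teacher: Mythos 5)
Your proof is correct and follows essentially the same route as the paper: reduce to $\vec z = \vec 0$ by shift-invariance, bound the $A$-marginal via the density window forced by $g'$ at the origin plus the freezing contribution $\epsilon_2$, bound the $B$-marginal via the event $g'(x)_{[0,N-1]^2} \equiv 1$ with a union bound over density failures and all-$1$ blocks, and fix $\epsilon_2, \epsilon_1, \delta$ before sending $N \to \infty$. The only cosmetic difference is that in the $A$-part the paper observes that $g(x)_{\vec 0}=1$ already forces $g'(x)_{\vec 0}=1$, so a single Hoeffding application suffices where you use a union bound over $O(N^2)$ positions; both give the same conclusion.
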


\begin{proof}
  It is sufficient to prove the inequalities for ${\vec z=\vec 0}$ since Bernoulli measures are translation invariant and translations are turned into translations through $A^{-1}$ and $B^{-1}$.
  We first show that $\epsilon_1$, $\epsilon_2$ and $\delta$ can be chosen so that for all $N$ large enough we have ${\mu_{\epsilon_2} A^{-1}([1]_{\vec 0}) > 1-\epsilon}$. By definition $A(x)_{\vec 0}=1$ iff $g(x)_{\vec 0}=0$. So ${\mu_{\epsilon_2} A^{-1}([1]_{\vec 0}) \geq 1 - \mu_{\epsilon_2} g^{-1}([1]_{\vec 0})}$. Moreover, by the definition of $g$ and $g'$ we have $g^{-1}([1]_{\vec 0}) \subseteq [1]_{\vec 0} \cup D_{\vec 0}$, where $D_{\vec z}$ is the set of configurations $x \in \{0,1\}^{\Z^2}$ such that the finite pattern $x_{\vec z+[-N,N]^2}$ has a density of $1$-symbols strictly between $\epsilon_1-\delta$ and $\epsilon_1+\delta$. We deduce that
\[
  \mu_{\epsilon_2} A^{-1}([1]_{\vec 0})\geq 1 - \epsilon_2 - \mu_{\epsilon_2}(D_{\vec 0}).
\]
Let us fix $\epsilon_2 = \epsilon/2$ and any values of $\epsilon_1$ and $\delta$ such that $0 < \delta < \epsilon_1 < \epsilon_2 - \delta$. From~\eqref{eq:technicalbernoulli} we know that ${\mu_{\epsilon_2}(D_{\vec 0})} \longrightarrow 0$ as $N$ grows. We deduce that for large enough $N$, we have ${\mu_{\epsilon_2} A^{-1}([1]_{\vec 0}) > 1-\epsilon}$.

Let us now prove that for the parameters $\epsilon_1$, $\epsilon_2$ and $\delta$ fixed above and for large enough $N$, we also have ${\mu_B([1]_0)>1-\epsilon}$.
We compute
\begin{align*}
  \mu_{\epsilon_1} B^{-1}([1]_{\vec 0}) \geq {} & \mu_{\epsilon_1} (\{x: g'(x)_{[0,N-1]^2} \equiv 1\})\\
  {} \geq {} & 1 - \sum_{\vec z \in [-4N, 4N-1]^2} \mu_{\epsilon_1} (\{x : x_{\vec z+[0,N-1]^2} \equiv 1\})\\
                                                & - \sum_{\vec z \in [0, N-1]} \mu_{\epsilon_1} (\{0,1\}^{\Z^2} \setminus D_{\vec z})\\
  {} \geq {} & 1 - 81 \cdot N^2 \cdot \epsilon_1^{N^2} - N^2 (1-\mu_{\epsilon_1} (D_{\vec 0})).
\end{align*}
The last expression goes to $1$ as $N$ goes to infinity because by~\eqref{eq:technicalbernoulli}, $\mu_{\epsilon_1} (D_{\vec 0}) \longrightarrow 1$ exponentially fast as $N$ grows.
\end{proof}

From Claim~\ref{cl:paths}, Claim~\ref{cl:params} and Lemma~\ref{lem:lowerbernoulli} we deduce that $f$ trivializes $\mu_{\epsilon_1}$ but not $\mu_{\epsilon_2}$.
The existence of $\epsilon_3$ follows from Lemma~\ref{lem:CofiniteIce} since $f$ does trivialize some full support measure.
\end{proof}

\section{Future directions}
\label{sec:future}

A natural (at least for cellular automata theorists) generalization of bootstrap percolation would be to consider the trivialization properties of freezing monotone CA on arbitrary poset alphabets.
To our knowledge, the following question is open even in the case of $P = \{0,1,2\}$ with the linear ordering.

\begin{question}
  Given a finite poset $P$ and a freezing monotone CA $f$ on $P^{\Z^2}$, is it decidable whether
  \begin{enumerate}
  \item
    $f$ trivializes all full-support product measures?
  \item
    $f$ trivializes some full-support product measure?
  \end{enumerate}
\end{question}

While we show in Section~\ref{sec:non-monotone} that the set of binary freezing CA trivializing at least one nontrivial Bernoulli measure is $\Pi^0_1$-complete, we have not been able to pinpoint the complexity of those CA that trivialize them all.
In fact, it is not immediately clear whether this set is even arithmetical.

\begin{question}
  What is the complexity of the set of binary freezing CA that trivialize all nontrivial Bernoulli measures?
\end{question}

Theorem~\ref{thm:weird-nilpotent} shows that the property of trivializing a Bernoulli measure need not be monotone with respect to the measure for a fixed binary freezing CA.
We believe our construction only scratches the surface of the measure trivialization property, and that much more intricate constructions are possible.
More explicitly, for a CA $f$ on $\{0,1\}^{\Z^2}$, let ${T(f) = \{ 0 \leq p \leq 1 \;|\; \text{$f$ trivializes $\mu_p$} \}}$.
If $f$ is freezing, then $1 \in T(f)$, and Theorem~\ref{thm:weird-nilpotent} shows that in this case $T(f) \setminus \{0\}$ need not be an interval.
Apart from these facts, it is not clear to us how intricate the structure of $T(f)$ can be.

\begin{question}
  What is the class of sets $T(f)$ for freezing CA $f$ on $\{0,1\}^{\Z^2}$?
\end{question}

\section*{Acknowledgments}

We are extremely grateful to an anonymous referee for their critique of our original draft, which led to substantial changes in exposition.

\printbibliography

\end{document}